\newcommand*\rel@kern[1]{\kern#1\dimexpr\macc@kerna}
\newcommand*\widebar[1]{%
  \begingroup
  \def\mathaccent##1##2{%
    \rel@kern{0.8}%
    \overline{\rel@kern{-0.8}\macc@nucleus\rel@kern{0.2}}%
    \rel@kern{-0.2}%
  }%
  \macc@depth\@ne
  \let\math@bgroup\@empty \let\math@egroup\macc@set@skewchar
  \mathsurround\z@ \frozen@everymath{\mathgroup\macc@group\relax}%
  \macc@set@skewchar\relax
  \let\mathaccentV\macc@nested@a
  \macc@nested@a\relax111{#1}%
  \endgroup
}
    \newtheorem{remark}{Remark}
    \newtheorem{ass}{Assumption}
    \newtheorem{lem}{Lemma}
    \newenvironment{lemma}{\begin{lem}}{\hfill {\small $\square$}\end{lem}}
    \newtheorem{clm}{Claim}
    \newtheorem{fct}{Fact}
    \newenvironment{fact}{\begin{fct}}{\hfill {\small $\square$}\end{fct}}
    \newtheorem{prop}{Proposition}
    \newtheorem{corol}{Corollary}
    \newtheorem{thm}{Theorem}
\newenvironment{proof}[1][\!]{{\it Proof\,#1: }}{{\hfill \mbox{\footnotesize$\blacksquare$}}}
    \definecolor{myred}{rgb}{0.86,0.1,0.16}
    \definecolor{myblue}{rgb}{0,0,.8}
\begin{document}

\title{\bf On the robustness of networks of heterogeneous semi-passive systems interconnected  over directed graphs}

\author{Anes Lazri \quad  Elena Panteley \quad Antonio  Lor{\'i}a 
	\thanks{A. Lazri  is with L2S, CNRS, Univ Paris-Saclay, France 
          (e-mail: anes.lazri@centralesupelec.fr)
          E. Panteley and A. Lor{\'\i}a are with L2S, CNRS, (e-mail: \{elena.panteley,antonio.loria\}@cnrs.fr). 
}}

\maketitle

\begin{abstract}
  In this short note we provide a proof of boundedness of solutions for a network system composed of heterogeneous nonlinear autonomous systems interconnected over a directed graph.  The sole assumptions imposed are that the systems are semi-passive \cite{Porgmosky_semipassivity} and the graph contains a spanning tree. 
\end{abstract}

 \begin{lemma}\label{lem:bndedness}
Consider a network containing $N$ interconnected dynamical systems\footnote{For clarity of exposition, and without loss of generality, we assume that $x_i \in \mathbb R$. However, all the statements remain true if $x_i \in \mathbb R^n$ for any $n>1$.}
\begin{equation}
  \label{963} \dot x_i = f_i(x_i) + u_i , \quad i\leq N, \quad x_i \in \mathbb R,
\end{equation}
each of which defines a semi-passive map $u_i\mapsto x_i$. Let 
\begin{equation}
  \label{967} u_i = -\sum_{j\in \mathcal N_i} a_{ij}(x_i-x_j), \quad a_{i,j} \geq 0,  
\end{equation}
where for each $i\leq N$, $\mathcal N_i$ denotes the set of nodes $\nu_j$ sending information to the node $\nu_i$. Let this network's topology be defined by a directed graph $G$ containing a  spanning tree. Then, the trajectories $t\mapsto x(t)$, where $x := [x_1\, \cdots\, x_N]$,  solutions to \eqref{963}-\eqref{967} for all $i \leq N$, are globally bounded. 
\end{lemma}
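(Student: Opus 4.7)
My plan is to decompose the digraph $G$ into its strongly connected components (SCCs) and proceed by induction on their topological order. Because $G$ contains a spanning tree, its condensation is a DAG with a \emph{unique} source SCC, call it $\mathcal{C}_1$, whose nodes can reach every other vertex. The remaining SCCs can be relabelled $\mathcal{C}_2,\dots,\mathcal{C}_m$ so that any inter-SCC edge $\mathcal{C}_\ell\to\mathcal{C}_k$ satisfies $\ell<k$; then every $\mathcal{C}_k$ with $k\ge 2$ receives external coupling signals only from previously indexed SCCs. A single weighted Lyapunov function $V=\sum\pi_i V_i$ built from the global left kernel $\pi^\top L=0$ cannot do the job by itself, since for a spanning-tree (but not strongly connected) graph $\pi$ has zero entries on every non-source SCC.

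\textbf{Base case.} Inside $\mathcal{C}_1$ the restricted dynamics is autonomous on a strongly connected subgraph with Laplacian $L_1$; let $\pi^{(1)}>0$ be its left Perron eigenvector, $\pi^{(1)\top}L_1=0$. Using the storage functions $V_i$ provided by semi-passivity of \eqref{963}, I would set $W_1 := \sum_{i\in\mathcal{C}_1}\pi_i^{(1)} V_i(x_i)$ and compute along \eqref{963}--\eqref{967}
\[\dot W_1 \;\le\; -\sum_{i\in\mathcal{C}_1}\pi_i^{(1)}H_i(x_i) \;-\; y^\top \mathrm{diag}(\pi^{(1)}) L_1\, y,\]
where $y$ denotes the subvector of $x$ indexed by $\mathcal{C}_1$. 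The quadratic form is non-positive because $\mathrm{diag}(\pi^{(1)})L_1+L_1^\top\mathrm{diag}(\pi^{(1)})\succeq 0$ whenever $L_1$ is the Laplacian of a strongly connected digraph. By semi-passivity the first sum eventually dominates, so $\dot W_1<0$ off a compact set and $\{x_i(t):i\in\mathcal{C}_1\}$ is globally bounded.

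\textbf{Inductive step.} Suppose $x_j(\cdot)$ is bounded for every $j\in\mathcal{C}_1\cup\cdots\cup\mathcal{C}_{k-1}$. For $i\in\mathcal{C}_k$, split the control as $u_i=u_i^{\mathrm{int}}-d_i^{\mathrm{ext}}x_i+b_i(t)$, where $u_i^{\mathrm{int}}$ collects the couplings with neighbors inside $\mathcal{C}_k$, $d_i^{\mathrm{ext}}:=\sum_{j\in\mathcal{N}_i\setminus\mathcal{C}_k}a_{ij}\ge 0$, and $b_i(t):=\sum_{j\in\mathcal{N}_i\setminus\mathcal{C}_k}a_{ij}x_j(t)$ is bounded by the inductive hypothesis. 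The augmented drift $f_i(x_i)-d_i^{\mathrm{ext}}x_i$ is still semi-passive (the extra dissipation only strengthens the inequality). Reapplying the Perron-weighted construction inside the strongly connected subgraph $\mathcal{C}_k$ yields
\[\dot W_k\;\le\; -\!\!\sum_{i\in\mathcal{C}_k}\!\pi_i^{(k)}\bigl[H_i(x_i)+d_i^{\mathrm{ext}}x_i^2\bigr] \;+\;\sum_{i\in\mathcal{C}_k}\pi_i^{(k)}b_i(t)x_i,\]
and a Young-type bound on the cross term closes the induction.

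The delicate point is this last step: one must check that the dissipation acquired from the upstream coupling, spread across $\mathcal{C}_k$ via the Perron weights, dominates the disturbance $\sum\pi_i^{(k)}b_i(t)x_i$. By the spanning-tree hypothesis at least one vertex of each non-source $\mathcal{C}_k$ has $d_i^{\mathrm{ext}}>0$, which together with the super-linear growth of $H_i$ implicit in Pogromsky's definition of semi-passivity makes $\dot W_k$ eventually negative; the induction then yields global boundedness of the entire state $x(t)$.
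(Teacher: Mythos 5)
Your decomposition is the same as the paper's: order the strongly connected components so that the (unique, by the spanning-tree hypothesis) source SCC comes first, treat it as an autonomous strongly connected network with the Perron-weighted storage function $W_1=\sum\pi_i^{(1)}V_i$, and propagate boundedness down the condensation DAG by viewing each subsequent SCC as a strongly connected network with added diagonal damping $-d_i^{\mathrm{ext}}x_i$ and a bounded exogenous input $b_i(t)$. This is precisely the content of the paper's Lemmas~2 and~3: the permutation into lower-block-triangular form with diagonal blocks $A_{ii}=L_{ii}+D_i$ and $D_i\boldsymbol 1_{n_i}=\sum_j A_{ij}\boldsymbol 1_{n_j}$, followed by the cascade argument.

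The one place where your argument goes astray is the closing justification of the inductive step. Semi-passivity in Pogromsky's sense gives only $H_i(x_i)\ge\psi_i(|x_i|)$ outside a ball, with $\psi_i$ positive and continuous; there is no ``super-linear growth of $H_i$'' to invoke. Likewise, the observation that ``at least one vertex of $\mathcal C_k$ has $d_i^{\mathrm{ext}}>0$'' is not the relevant one: dissipation acquired at one node cannot absorb a disturbance entering at a different node, and the Perron weights do not ``spread'' it. What actually closes the induction --- and what the paper isolates explicitly in its Lemma~3 via the condition $d_k=\sum_j\sum_\ell [B_j]_{k\ell}$ --- is a node-by-node matching: by your own definitions, $b_i(t)\not\equiv 0$ only at nodes where $d_i^{\mathrm{ext}}>0$, since both quantities are built from the same coefficients $a_{ij}$ with $j\in\mathcal N_i\setminus\mathcal C_k$. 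At every such node the quadratic term $-\pi_i^{(k)}d_i^{\mathrm{ext}}x_i^2$ dominates the linear cross term $\pi_i^{(k)}|b_i(t)|\,|x_i|$ for $|x_i|$ large, and at all other nodes the cross term vanishes identically; the Young-type bound you mention then goes through with no growth assumption on $H_i$ beyond semi-passivity. With that correction your proof coincides with the paper's.
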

\begin{proof} 
The system \eqref{963}-\eqref{967} in compact form, {\it i.e.,} defining $x:= [x_1 \ \cdots \ x_N]^\top$, becomes  
\begin{equation}
  \label{95}  \dot x = F(x) - Lx,
\end{equation}
where  $F(x):= [f_1(x_1) \,\cdots\, f_N(x_N)]^\top$ and 
\[
[L]_{i,j} =  \left\{ 
\begin{array}{ll} \!\!
  -a_{ij}, & i\neq j \\[2pt]
  \!\!\displaystyle\sum^{N}_{\mbox{\scriptsize 
    $\begin{array}{c}
  \ell = 1\\[-1pt] \ell\neq i
    \end{array}$}
  } \!\!\! a_{i\ell}, & i = j, \quad i,\,j \leq N.
\end{array}\right.
\]

By assumption, the graph $G$ contains a spanning tree. If, in addition, it is strongly connected, the results follows along the lines of the proof of \cite[Proposition 2]{DYNCON-TAC16}. If the graph is not strongly connected, the result follows by observing that by reordering the network’s states, the Laplacian  $L$ may be transformed into that of a connected network that consists in a spanning-tree of strongly-connected sub-graphs. Hence, the transformed Laplacian matrix possesses a convenient lower-block-triangular form (see Lemma 2 below). Then, the statement follows after a cascades argument, from the fact that the trajectories of each strongly-connected sub-graph are bounded and remain bounded under the effect of the interconnections (see Lemma \ref{claim2}).

\begin{lemma}\label{claim0}
Let $L\in \mathbb R^{N\times N}$ be the Laplacian matrix associated to a directed connected graph $G$ that contains a directed spanning tree, but is not strongly connected. 
Then, there exists a permutation matrix $T$ and a number $m \in \{2,3, \ldots, N\}$, such that 
\begin{equation}
  \label{975} T^\top L T  = 
  \begin{bmatrix}
    A_{11} & 0 & \ \cdots & 0 \\[4pt]
    -A_{21} & A_{22} & & \\[-17pt]
           & & \rotatebox{10}{$\ddots$}& \vdots\\[-3pt]
    \vdots & & \rotatebox{10}{$\ddots$}& 0 \\[3pt]
    -A_{m1} &  \cdots & -A_{m, m-1} & A_{m, m} 
  \end{bmatrix},
\end{equation}
where for each $i\in \{2,3,\ldots,m\}$, $A_{ii}\in \mathbb R^{n_i\times n_i}$, $A_{ii} = L_{ii} + D_i$, where $L_{ii}\in \mathbb R^{n_i \times n_i}$ corresponds to the Laplacian of a strongly-connected directed graph, $D_i$ 
is a diagonal matrix of  non-negative entries, and $A_{ij}\in \mathbb R^{n_i\times n_j}$ is such that $D_i \boldsymbol 1_{n_i} = \sum_{j=1}^{i-1} A_{ij} \boldsymbol 1_{n_j}$, with $\boldsymbol 1_{n_i} := [1\,\cdots\, 1]^\top \in \mathbb R^{n_i}$, and $D_1 := 0$.
\end{lemma}

\vskip 3pt
Let Lemma \ref{claim0} generate a permutation matrix $T$ and define $z := T^\top x$. Since $T$ is a permutation matrix it is invertible with $T^{-1} := T^\top$. In turn, $t\mapsto x(t)$ of \eqref{95} is globally bounded if and only if so is $t\mapsto z(t)$, solution to 
\begin{equation}
  \label{111}  \dot z = F_z(z) - T^\top L T z,
\end{equation}
where $F_z(z) := T^\top F(Tz)$. 
\begin{remark}
  Since $T$ is a permutation matrix the $i$th element in the vector $F_z(z)$ depends only on $z_i$, {\it i.e.,}  $F_z(z):= [f_1(z_1) \,\cdots f_N(z_N)]^\top$.
\end{remark}

It is only left to show that $t\mapsto z(t)$ is globally bounded. To that end, we use the lower block-triangular structure of $T^\top L T$. Consider the first $n_1$ equations in \eqref{111}, that is, let $\bar z_1 := [z_1 \,\cdots\, z_{n_1}]^\top$, $\bar F_{z_1}(\bar z_1) := [f_1(z_1) \,\cdots  f_{n_1}(z_{n_1})]^\top$. Then, 
\begin{equation}
  \label{119} \dot{\bar z}_1 = \bar F_{z_1}(\bar z_1) - A_{11} z_1, 
\end{equation}
where, after Lemma \ref{claim0}, $A_{11}$ is the Laplacian of a strongly connected graph (since $D_1 :=0$). It follows that the equation \eqref{119} corresponds to the dynamics of a strongly connected network, whose solutions are globally bounded. The latter follows from the proof of \cite[Proposition 2]{DYNCON-TAC16}\footnote{We invoke the proof of \cite[Proposition 2]{DYNCON-TAC16} and not the statement since it is therein inappropriately assumed that the graph is undirected, but the proof of the statement applies to connected-and-balanced graphs, as well as to strongly-connected ones.}. 

Now, the second set of equations in \eqref{111} corresponds to 
\begin{equation}
  \label{125}  \dot{\bar z}_2 = \bar F_{z_2}(\bar z_2) - L_{22}z_2 - D_2 \bar z_2 + A_{21} \bar z_1, \quad \bar z_2 \in \mathbb R^{n_2}
\end{equation}
where $L_{22}$ is the Laplacian of a strongly connected graph and $D_2 \boldsymbol 1_{n_2} = A_{21} \boldsymbol 1_{n_1}$. Note that \eqref{125} corresponds to the dynamics model of a strongly-connected network of semi-passive systems of the form \eqref{963}-\eqref{967}, of dimension $N = n_2$, with an additional  stabilizing term $- D_2 \bar z_2$, and perturbed by an input $v_1 := \bar z_1$. For such systems, we have the following (see the proof below). 
\begin{lemma}\label{claim2}
Consider a group of $N$ semi-passive systems \eqref{963} with input \eqref{967}, interconnected over a strongly connected directed graph with associated Laplacian $L$. Let $B_j \in \mathbb R^{N\times M_j}$, with $j\leq p$,  and $D\in \mathbb R^{N\times N}$, $D:=\mbox{diag}[d_k]$ be matrices whose entries are non-negative, and such that for any $k\leq N$, $d_k := \sum_{j=1}^p \sum_{\ell = 1}^{M_j} [B_j]_{k\ell}$.  Let $\bar x := [x_1 \,\cdots\, x_N]^\top$, $F(\bar x):= [f_1(x_1) \,\cdots\, f_N(x_N)]^\top$ and $v_j \in \mathbb R^{M_j}$ be external bounded inputs. Then, the trajectories of the perturbed networked system 
\begin{equation}
  \label{121}  \dot{\bar x} = F(\bar x) -L\bar x - D\bar x + \sum_{j\leq p}B_j v_j,
\end{equation}
$t\mapsto \bar x(t)$, are globally bounded. 
\end{lemma}

Now, Lemma \ref{claim2} applies to Eq. \eqref{125} with  $p = 1$, $M_1 = n_1$,  $B_1 := A_{21}$, and the input $v_1 := \bar z_1$, which we established to be bounded. In addition, the $k$th element of $D_2$, denoted $d_{2k}$ satisfies $d_{2k} = \sum_{\ell = 1}^{n_1}[A_{21}]_{k\ell}$, where $[A_{21}]_{k\ell}$ denotes the $\ell$th element of the $k$th row of $A_{21}$. It follows, from Lemma \ref{claim2} that the solutions $t\mapsto \bar z_2(t)$ are globally bounded. In turn, for any $i\leq m$, the $i$th set of equations in \eqref{111} reads
\begin{equation}
  \label{129}  \dot{\bar z}_i = \bar F_{z_i}(\bar z_i) - L_{ii}\bar z_i - D_i \bar z_i + \sum_{j=1}^{i-1}   A_{ij} \bar z_j.
\end{equation}
Equation \eqref{129} is of the form \eqref{121}, with $p = i-1$, $v_j:= \bar z_j$, $B_j := A_{ij}$, and $L_{ii}$ corresponds to the Laplacian of a strongly connected network. For each $k\leq n_i$, the $k$th element in the diagonal of $D_i$ satisfies, by definition, $d_{i_k} =  \sum_{j=1}^{i-1} \sum_{\ell = 1}^{n_j} [A_{ij}]_{k\ell}$, where $[A_{ij}]_{k\ell}$ corresponds to the $\ell$th element in the $k$th row of $A_{ij}$. Therefore,
Invoking Lemma \ref{claim2}, with $\bar x := \bar z_i$,  it follows that $t\mapsto \bar z_i(t)$ is globally bounded. The statement of Lemma \ref{lem:bndedness} follows by applying the previous arguments, sequentially, for each $i \in \{3,4,\cdots,\,m\}$.
\end{proof} 
\vskip 3pt
%

\begin{proof}[\,of Lemma \ref{claim0}]
Consider the following.
\begin{fact}\label{fact1}
If a graph $G$, with Laplacian $L_G$, contains a directed spanning tree and is not strongly connected, then there exists a permutation matrix $P_G$ such that $P_G^\top P_G = I$ and 
\begin{equation}
  \label{187} P_G^{\top} L_G P_G = 
  \begin{bmatrix}
    \phantom{-}Q_{G'} & 0 \\
    - R_{\widebar G'} & S_{\widebar G'} 
  \end{bmatrix},
\end{equation}
where $Q_{G'}\in \mathbb R^{n'\times n'}$, with $n' < N$, is the Laplacian matrix of the largest strongly-connected sub-graph $G' \subset G$, containing  $n'$ nodes, including all the root nodes in $G$. The matrix $S_{\widebar G'} $ satisfies  $S_{\widebar G'}  = L_{\widebar G'}  + D_{\widebar G'} $ where $L_{\widebar G'} $ is a Laplacian matrix associated to the graph ${\widebar G'} := G\backslash G'$ and $D_{\widebar G'} $ is the degree matrix, which is diagonal and contains the weights of the links from $G'$ to ${\widebar G'}$.
\end{fact}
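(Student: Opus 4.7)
The plan is to isolate the unique strongly-connected component that contains all the root nodes of $G$, put those nodes first in the ordering, and then read off the block structure of \eqref{187} directly from the Laplacian convention. Since $G$ contains a directed spanning tree it possesses at least one root, i.e., a node with a directed path to every other node. I would first verify that every root belongs to a common strongly-connected component: if $r_1$ and $r_2$ are both roots, each reaches the other, so they lie in the same SCC, and every node of this SCC is itself a root because it reaches $r_1$, which in turn reaches every node of $G$. Call $G'$ this SCC and let $n' := |G'|$; since $G$ is not strongly connected by hypothesis, necessarily $n' < N$.

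The crux of the argument, and the only non-trivial step, is to show that there is no edge of $G$ from a node $w \in \widebar G' := G\setminus G'$ to a node $v \in G'$. Suppose such an edge existed. Then $w$ would reach $v$, and through $v$ (a root) would reach every node of $G$, making $w$ itself a root; but every root lies in $G'$, a contradiction. Once this ``no-incoming-edge'' property is established, I would choose a permutation matrix $P_G$ that lists the nodes of $G'$ first and the nodes of $\widebar G'$ afterward; the orthogonality $P_G^\top P_G = I$ is automatic for any permutation matrix.

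It remains to read off the four blocks of $P_G^\top L_G P_G$ using the convention $[L_G]_{ij} = -a_{ij}$ for $i\neq j$, with $a_{ij}$ the weight of the edge from node $j$ to node $i$. The zero upper-right block encodes exactly the absence of edges from $\widebar G'$ into $G'$ proved above. The upper-left block coincides with the Laplacian $Q_{G'}$ of the induced subgraph $G'$: for $i \in G'$ the only nonzero off-diagonal entries in row $i$ correspond to senders in $G'$, and the diagonal entry accumulates only those weights. The off-diagonal entries of the lower-right block coincide with those of the Laplacian $L_{\widebar G'}$ of the induced subgraph on $\widebar G'$, whereas the diagonal entry at $i \in \widebar G'$ aggregates weights from both $\widebar G'$-senders and $G'$-senders; splitting this sum yields $S_{\widebar G'} = L_{\widebar G'} + D_{\widebar G'}$, with $D_{\widebar G'}$ diagonal and collecting the aggregated in-weights from $G'$ to each node of $\widebar G'$. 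The lower-left block $-R_{\widebar G'}$ records precisely those cross-links. The entire argument, once the no-edge observation is in hand, reduces to this bookkeeping; the main obstacle is therefore the reachability/contradiction step that rules out edges from $\widebar G'$ into $G'$.
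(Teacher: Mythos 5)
Your proof is correct and follows essentially the same route as the paper: identify the set of root nodes as a strongly connected component that receives no edges from the rest of the graph, order those nodes first, and read off the block structure of the permuted Laplacian. Your write-up is in fact more complete than the paper's own sketch, which only argues informally that the roots form a strongly connected set and leaves both the no-incoming-edge property and the block bookkeeping implicit.
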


The previous fact is true because if $G$ contains only one spanning tree, say $\mathcal T_G$ with root node $\nu_0$, then $\nu_0$ has no incoming link. Therefore,  we can set $G' := (\{\nu_0\}, \emptyset)$. If $\nu_0$ has incoming links, it necessarily forms part of a strongly-connected graph containing at least two nodes including  $\nu_0$ and a bidirectional link, thereby forming a strongly connected set. The same reasoning holds if $G$ has several spanning trees, in which case the respective roots also make part of $G'$. 

Thus, since by assumption, the graph $G$ contains a directed spanning tree $\mathcal T_G$,  let Fact \ref{fact1} generate the largest strongly connected sub-graph of $G$, which containing all the roots of $G$ and $n_1$ nodes in total and we call $G_1 \subset  G$. Then, let $Q_{G_1} \in \mathbb R^{n_1\times n_1}$ denote the Laplacian associated to $G_1$.  Then, for the block $A_{11}$ in \eqref{975} we set $A_{11} := Q_{G_1}$. That is, $A_{11} \in \mathbb R^{n_1\times n_1}$ is the  Laplacian of a strongly connected graph, as desired. Let $\widebar G_1 := G\backslash G_1$  denote the complement of $G_1$. Fact \ref{fact1} also generates the matrices $R_{\widebar G_1}$ and $S_{\widebar G_1}$. That is, 
\begin{equation}
  \label{204.11} P_G^{\top} L_G P_G = 
  \begin{bmatrix}
    \phantom{-}A_{11} & 0 \\
    - R_{\widebar G_1} & S_{\widebar G_1} 
  \end{bmatrix},
\end{equation}

The off-diagonal entries of the matrix $S_{\widebar G_1}\in \mathbb R^{(N-n_1)\times (N-n_1)}$ are non-positive. They represent edges belonging to the graph ${\widebar G_1}$. Indeed, The matrix $S_{\widebar G_1} = L_{\widebar G_1} + D_{\widebar G_1}$, where $L_{\widebar G_1}$ corresponds to the Laplacian associated to the graph ${\widebar G_1}$ and $D_{\widebar G_1}$ is the degree matrix, which is diagonal positive semidefinite and contains the weights of the links from $G_1$ to ${\widebar G_1}$. The entries in the matrix $R_{\widebar G_1}\in \mathbb R^{(N-n_1)\times n_1}$ represent the outgoing links emanating from nodes belonging to $G_1$ towards nodes in the rest of the graph, {\it i.e.,} ${\widebar G_1}$. If $\widebar G_1$ is strongly connected, the matrix in \eqref{204.11} has the desired structure in \eqref{975} and the proof ends. 
%
%

If ${\widebar G_1}$ is not strongly connected, we look for a permutation matrix $P_{\widebar G_1} \in \mathbb R^{(N-n_1)\times (N-n_1)}$ such that $P_{\widebar G_1}^{\top} L_{\widebar G_1} P_{\widebar G_1}$ has a block-triangular form as in \eqref{187}. To that end, we consider two possibilities depending on whether $\widebar G_1$ contains or not a spanning tree. 

\noindent \underline{\it Case 1}: Assume that $\widebar G_1$ contains a spanning tree, or several. Necessarily, the root of at least one of the trees has an incoming link from $G_1$. Then, let Fact \ref{fact1} generate  the largest strongly-connected graph $G_2 \subset \widebar G_1$, containing $n_2$ nodes, including all the roots in $\widebar G_1$. Also after Fact \ref{fact1} there exists a permutation matrix $P_{\widebar G_1}$ such that 
\begin{equation}\label{203}
  P_{\widebar G_1}^{\top} L_{\widebar G_1}   P_{\widebar G_1} 
  = 
  \begin{bmatrix}
    \phantom{-}Q_{G_2} & 0 \\
    -R_{\widebar G_2} & S_{\widebar G_2}
  \end{bmatrix},
\end{equation}
where $Q_{G_2}\in \mathbb R^{n_2\times n_2}$ is the Laplacian matrix associated to $G_2$, $S_{\widebar G_2} := L_{\widebar G_2} + D_{\widebar G_2}$. Also, we define $\widebar G_2 := \widebar G_1\backslash G_2 =   G \backslash \{G_1 \cup G_2\} $, {\it i.e.,} $\widebar G_2$ contains all the nodes in $G$, but which are not contained in $G_1$ nor in $G_2$. 

Then, we apply the permutation blockdiag$\big[ I_{n1} \ P_{\widebar G_1}^\top \big]$ on the matrix on the right-hand side of \eqref{204.11}. We obtain the matrix 
\begin{align}
  \begin{bmatrix}
     A_{11} & 0  \\ -P_{\widebar G_1}^\top R_{\widebar  G_1} & P_{\widebar G_1}^\top S_{\widebar G_1} P_{\widebar G_1}
  \end{bmatrix},
  \label{204}
\end{align}
which has a lower-block-triangular structure and $A_{11}$ corresponds to the Laplacian associated to a strongly connected graph, as desired. Furthermore, the block $P_{\widebar G_1}^\top R_{\widebar G_1} \in \mathbb R^{(N-n_1)\times n_1}$ may be split into two stacked sub-blocks. The upper one is of dimension $n_2\times n_1$ and contains the links that connect the  nodes in $G_1$ to  nodes in $G_2$; for the purpose of constructing \eqref{975}, we name this sub-block $A_{21}$. On the other hand, by the definition of $S_{\widebar G_1}$ and \eqref{203},
\begin{equation}
  \label{266}  P_{\widebar G_1}^\top S_{\widebar G_1} P_{\widebar G_1} = 
  \begin{bmatrix}
    \phantom{-}Q_{G_2} & 0 \\
    -R_{\widebar G_2} & S_{\widebar G_2}
  \end{bmatrix}
+ P_{\widebar G_1}^\top D_{\widebar G_1} P_{\widebar G_1}.
\end{equation}
The last term on the right-hand side of \eqref{266} is diagonal and may be split into two diagonal sub-blocks, {\it i.e.,} $P_{\widebar G_1}^\top D_{\widebar G_1} P_{\widebar G_1} =: $ blockdiag$\big[D'_{\widebar G_1} \ D''_{\widebar G_1}  \big]$. Then, we set $A_{22}$ in \eqref{975} to $A_{22} := Q_{G_2} + D'_{\widebar G_1}$ and we redefine $S_{\widebar G_2} := L_{\widebar G_2} + D_{\widebar G_2} + D_{\widebar G_1}''$. Thus, after \eqref{204} and \eqref{266}, and the previous definitions, we have
\begin{align}
  \begin{bmatrix}
     I_{n1} & 0  \\ 0 & P_{\widebar G_1}^\top
  \end{bmatrix}
  \begin{bmatrix}
    \phantom{-}A_{11} & 0 \\
    - R_{\widebar G_1} & S_{\widebar G_1}
  \end{bmatrix}
  \begin{bmatrix}
     I_{n1} & 0  \\ 0 & P_{\widebar G_1}
  \end{bmatrix}
\, = \
  \begin{bmatrix}
     \phantom{-}A_{11} & 0  & 0 \\ 
     -A_{21} & A_{22} & 0 \\
     \phantom{-}[\,*\,] & -R_{\widebar G_2} & S_{\widebar G_2}
  \end{bmatrix}.
\nonumber\\[-7pt] \ 
  \label{248}
\end{align}

In the matrix on the right-hand side of \eqref{248}, the entries of $R_{\widebar G_2}$ represent the edges connecting the nodes from $G_2\subset \widebar G_1$ to the rest of the sub-graph $\widebar G_1$, {\it i.e.,} $\widebar G_2$. Now, as previously remarked for $\widebar G_1$, $\widebar G_2$ may or may not contain a spanning tree. If it does, and $\widebar G_2$ is strongly connected, the matrix on  the right-hand side of \eqref{248} qualifies as the sought matrix in \eqref{975}. If $\widebar G_2$ contains a spanning tree, but is not strongly connected, Fact 1 applies to $\widebar G_2$ and generates a strongly connected graph $G_3\subset \widebar G_2$ and its complement $\widebar G_3 := \widebar G_2\backslash G_3 =  G \backslash \{G_1 \cup G_2 \cup G_3\}$. Then, we repeat the procedure above with the pertinent changes in the notation, {\it etc.} The process repeats as long as Fact \ref{fact1} applies, thereby generating a finite sequence of subgraphs $\{G_k\}$, with $k\in\{1,2,\ldots,m\}$ and  $m\leq N$, such that $G_k$ is the largest strongly connected sub-graph having incoming links {\it only} from subgraphs $G_{\ell}$ with $\ell\leq k-1$. For any such $k$, we obtain
\begin{equation}
  \label{272}  
  \begin{bmatrix}
     \phantom{-}A_{11} & \phantom{-}0  & \ \cdots & \cdots & 0 \\[-3pt] 
     -A_{21} & \phantom{-}A_{22} & \rotatebox{3}{$\ddots$} & & \vdots \\
     \vdots &  & \rotatebox{3}{$\ddots$} & \rotatebox{3}{$\ddots$}  &  \\[3pt]
     -A_{k-1,1} & \quad \cdots & & \!A_{kk} & 0 \\[3pt]
     \phantom{-}[\,*\,] & \cdots & \phantom{-}[\,*\,]  & -R_{\widebar G_k} & S_{\widebar G_k}
  \end{bmatrix}.
\end{equation}

 By construction, the lowest-rightest sub-block in the matrix on the right-hand side of \eqref{272} may be decomposed as $S_{\widebar G_k} = L_{\widebar G_k} + D_{\widebar G_k}$, where $D_{\widebar G_k}$ contains the weights of the links from the graphs $G_{\ell}$ with $\ell\leq k$ to $\widebar G_k$ and the previous arguments apply if $\widebar G_k$ contains a spanning tree. On the contrary, if $\widebar G_k$, for any $k\geq 1$, does not contain a spanning tree, the following applies. 

\noindent \underline{\it Case 2}: Assume that  $\widebar G_k$, with $k\geq 1$, does not contain a spanning tree. It follows that the associated Laplacian $L_{\widebar G_k}$ has $\mu_k > 1$ null eigenvalues. 
After \cite[Theorem 3.2]{caughman2006kernels}---{\it cf.} \cite[Proposition 3]{monaco2019multi}, it follows that there exists a permutation matrix $T$ such that 
\begin{equation}
  \label{261} 
T^\top L_{\widebar G_k} T = 
  \begin{bmatrix}
    L_{\widebar G_k}^{1} & 0 & \ \cdots & 0 \\[4pt]
    0 & L_{\widebar G_k}^{2} & & \\[-17pt]
           & & \rotatebox{10}{$\ddots$}& \vdots\\[-3pt]
    \vdots & & \rotatebox{10}{$\ddots$}& 0 \\[3pt]
    -M_{\mu_k + 1, 1} &  \cdots & -M_{\mu_k + 1, \mu_k} &\  M_{\mu_k + 1,\mu_k+1} 
  \end{bmatrix},
\end{equation}
where each block $L_{\widebar G_k}^{i}$, with $i \leq \mu_k$ corresponds to a Laplacian matrix associated to a sub-graph of $\widebar G_k$, that contains a spanning tree and that we denote $G_{k+i}$, for all $i \in \{1,2,\ldots,\mu_k\}$. Therefore, each  $L_{\widebar G_k}^{i}$ corresponding to a strongly connected graph $G_{k+i}$ may be placed in the appropriate order in the block diagonal of a block-triangular matrix of the form \eqref{975}, hence renamed $A_{jj}$. On the other hand, for each $G_{k+i}$ that is not strongly connected, Fact \ref{fact1} above applies, so we proceed as in Case 1. The sub-block $M_{\mu_k + 1,\mu_k+1}$ may be decomposed into $M_{\mu_k + 1,\mu_k+1} := L_{\mu_k + 1,\mu_k+1} + D_{\mu_k + 1,\mu_k+1}$, where $L_{\mu_k + 1,\mu_k+1}$ is a Laplacian and  $D_{\mu_k + 1,\mu_k+1}$  is a degree (diagonal semi-positive definite) matrix. $L_{\mu_k + 1,\mu_k+1}$ corresponds to a graph that  may or may not have a spanning tree, so either Case above applies.  

Since the graph $G$ has a finite number of nodes $N$, the processes described in Cases 1 and 2 above finish when either $\widebar G_k$ in Case 1 or the graph with Laplacian $L_{\mu_k + 1,\mu_k+1}$ in Case 2 is strongly connected, so we set either $A_{m,m}:= S_{\widebar G_k}$ or $A_{m,m}:= M_{\mu_k + 1,\mu_k+1}$.  This event will surely occur because after sufficiently many iterations, either of those graphs may contain only one leaf node, which constitutes a strongly connected (trivial) graph with  incoming edges. 



\end{proof}

\begin{proof}[of Lemma \ref{claim2}]
We follow the proof-lines of \cite[Proposition 2]{DYNCON-TAC16}. Under the assumption that $u_i \mapsto x_i$ defines a semi-passive map, for any $i\leq N$ there exists a radially unbounded storage function $V_i: \mathbb{R} \to \mathbb{R}_{+}$, a continuous function $H_i$, a positive continuous function $\psi_i$, and a positive constant $\rho_i$, such that the total derivative along the trajectories of \eqref{963} yields
\begin{equation}\label{eq890}
\dot V_i (x_i) \leq  u_i^\top x_i - H_i (x_i),
\end{equation}
where $H_i(x_i) \geq \psi_i(|x_i|)$ for all $|x_i| \geq \rho_i$. Next, let  $V_\Sigma (\bar x) :=  \sum_{i=1}^N \mu_i V_i(x_i)$, where $\mu_i$ corresponds to the $i$th element of the left eigen-vector associated to the zero eigen-value of $L$. Since by assumption the network is strongly connected, $\mu_i > 0$ for all $i \leq N$. Then, using \eqref{eq890} we see that 
\begin{equation}\label{eq891}
\dot V_\Sigma (\bar x) \leq  \sum_{i=1}^N \mu_i u_i^\top x_i -  \sum_{i=1}^N \mu_i H_i (x_i).
\end{equation}
The first term on the right-hand side of \eqref{eq891} gives
\begin{equation}\label{eq892}
 \sum_{i=1}^N \mu_i u_i^\top x_i  = u^\top\mathcal M \bar x,
\end{equation}
where $\mathcal M:=\mbox{diag}[\mu_i]$. Then, setting 
\[
u = -L\bar x - D\bar x + \sum_{j\leq p}B_j v_j,
\]
it follows that the derivative of $V_\Sigma(\bar x)$ along the trajectories of \eqref{121} satisfies
\begin{align}
\nonumber 
 \dot V_\Sigma (\bar x) \leq & -  \sum_{i=1}^N \mu_i H_i (x_i) -\bar x^\top L^\top \mathcal M \bar x\\
& \ - \bar x^\top D \mathcal M \bar x +  \Big[\sum_{j\leq p}B_j v_j \Big]^\top  \mathcal M \bar x.
  \label{209}  
\end{align}

Now, since the units are semi-passive, for each $i\leq N$, there exists $\rho_i > 0$ such that  $H_i(x_i) \geq \psi_i(|x_i|)$ for all $|x_i| \geq \rho_i$. Then, let $\bar \rho := \max_i \{ \rho_i \}$; it follows that 
\begin{equation}
  \label{216} -  \sum_{i=1}^N \mu_i H_i (x_i) \leq  - \sum_{i=1}^N\mu_i \psi_i (x_i),  
\end{equation}
for all all $|x_i| \geq  \bar \rho$. 

Furthermore, since the graph is strongly connected, $L^\top \mathcal M + \mathcal M L$ is positive semi-definite---{\it cf.} \cite[Proof of Proposition 2]{DYNCON-TAC16}. Hence, the second term on the right-hand side of \eqref{209} is non-positive. 

For the last two terms on the right-hand side of  \eqref{209} we observe that by the definition of $d_k$ and the fact that all the elements of any $B_j$ are non-negative, we have  $d_k = 0$ if and only if $[B_j]_{k\ell} = 0$ for all $\ell \leq M_j$ and for all $j\leq p$. Therefore, the third and fourth terms on the right-hand side of \eqref{209} satisfy 
\begin{equation}
  \label{225} 
- \bar x^\top D \mathcal M \bar x +  \Big[\sum_{j\leq p}B_j v_j \Big]^\top  \mathcal M \bar x
 \leq 
- \sum_{i=1}^N \big[ c_{1i} x_i^2  -  c_{2i} |x_i|  \big],
\end{equation}
where $c_{1i}$, $c_{2i} \geq 0$ and $c_{1i} = 0$ if and only if $c_{2i}= 0$. Therefore, for any $i\leq N$ there exists $\eta_i \geq 0$ such that for all $|x_i| \geq \eta_i$, $c_{1i} x_i^2 \geq  c_{2i} |x_i|$. Thus, 
\begin{equation}
\dot V_\Sigma (x)  \leq - \sum_{i=1}^N\mu_i 
\psi_i (|x_i|) \leq 0
\end{equation}
for all $|x_i| \geq \max\{\rho_i, \, \eta_i\}$. 
%
%
We conclude that if for any  $i\leq N$,   $|x_i(t)| \to \infty$ then there exists $T >0$ such that for all $t \geq T$, we have  $\dot V_\Sigma (x(t)) \leq 0$ for all $t\geq T$. The statement follows. 

\end{proof}

\bibliographystyle{IEEEtran}
\bibliography{../../../Biblio/Biblio_Anes/biblio_anes}

\begin{thebibliography}{1}
\providecommand{\url}[1]{#1}
\csname url@samestyle\endcsname
\providecommand{\newblock}{\relax}
\providecommand{\bibinfo}[2]{#2}
\providecommand{\BIBentrySTDinterwordspacing}{\spaceskip=0pt\relax}
\providecommand{\BIBentryALTinterwordstretchfactor}{4}
\providecommand{\BIBentryALTinterwordspacing}{\spaceskip=\fontdimen2\font plus
\BIBentryALTinterwordstretchfactor\fontdimen3\font minus
  \fontdimen4\font\relax}
\providecommand{\BIBforeignlanguage}[2]{{%
\expandafter\ifx\csname l@#1\endcsname\relax
\typeout{** WARNING: IEEEtran.bst: No hyphenation pattern has been}%
\typeout{** loaded for the language `#1'. Using the pattern for}%
\typeout{** the default language instead.}%
\else
\language=\csname l@#1\endcsname
\fi
#2}}
\providecommand{\BIBdecl}{\relax}
\BIBdecl

\bibitem{Porgmosky_semipassivity}
A.~Pogromsky, ``Synchronization and adaptive synchronization in semi-passive
  systems,'' in \emph{Proc. 1st Int. Conf. Control of Oscillations and Chaos},
  vol.~1, 1997, pp. 64--68 vol.1.

\bibitem{DYNCON-TAC16}
E.~Panteley and A.~Lor\'{\i}a, ``Synchronization and dynamic consensus of
  heterogeneous networked systems,'' \emph{IEEE Trans. on Automatic Control},
  vol.~62, no.~8, pp. 3758--3773, 2017.

\bibitem{caughman2006kernels}
J.~S. Caughman and J.~Veerman, ``Kernels of directed graph {L}aplacians,''
  \emph{The Electronic Journal of Combinatorics}, vol.~13, no.~1, p. R39, 2006.

\bibitem{monaco2019multi}
S.~Monaco and L.~R. Celsi, ``On multi-consensus and almost equitable graph
  partitions,'' \emph{Automatica}, vol. 103, pp. 53--61, 2019.

\end{thebibliography}

\end{document}